\documentclass[a4paper,11pt]{article}
\usepackage{amssymb}
\usepackage[utf8]{inputenc}
\usepackage[T1]{fontenc}
\usepackage{graphicx}
 \usepackage{amssymb,amsmath,latexsym,amsthm}
\usepackage[a4paper]{geometry}
\usepackage{url}
\usepackage{tocloft}  
\usepackage[utf8]{inputenc}   
\usepackage[T1]{fontenc}
\usepackage{graphicx}
\usepackage{mathrsfs}
\usepackage{frcursive}
\usepackage{color}
\usepackage{calligra}
\usepackage{pst-node}
\usepackage{breqn}
\usepackage{tikz}
\usepackage[mathscr]{euscript}
\usepackage{calrsfs}
\usepackage{float}
\usepackage{amssymb}
\usepackage{amssymb}
\usepackage{titlesec}
\usepackage[Rejne]{fncychap}
\usepackage{lipsum}
\usepackage{float}
\usepackage{xfrac} 
\usepackage[Rejne]{fncychap}
\usepackage{faktor}
\usepackage{amsmath}
\usepackage[T1]{fontenc}
\usepackage[utf8]{inputenc}
\usepackage[colorlinks=true,urlcolor=black,linkcolor=blue,citecolor=black]{hyperref}
\usepackage[english]{babel}
\usepackage{amsthm}
\usepackage{bbm}
\usepackage{float}

\newcounter{mythm}[section]

\newtheorem{tm}[mythm]{Theorem}

\newtheorem{defi}[mythm]{Definition}

\newtheorem{rem}[mythm]{Remark}
\newtheorem{cor}[mythm]{Corollary}
\newtheorem{ex}[mythm]{Example}
\newtheorem{nota}[mythm]{Notation}

\def\C{\mathcal{C}}
\def\R{\mathbb{R}}

\def\N{\mathbb{N}}

\newcommand\restr[2]{{
  \left.\kern-\nulldelimiterspace 
  #1 
  \littletaller 
  \right|_{#2} 
  }}

\newcommand{\littletaller}{\mathchoice{\vphantom{\big|}}{}{}{}}
 
\begin{document}

\begin{center}
\textsc{\textbf{\LARGE{A Liouville theorem for some asymptotically conical Calabi-Yau manifolds}}}
\end{center}

\begin{center}
\textsc{ABDOU OUSSAMA BENABIDA}
\end{center}

\noindent \textsc{Abstract.} Let $(\C, J_\C, \omega_\C, g_\C)$ be a Calabi-Yau cone and $(M, J, \omega, g)$ an open Ricci-flat Kähler manifold. We show that, if there exists a diffeomorphism $\Phi: \C \setminus \overline{B_1(o)} \rightarrow M \setminus K$, for some compact $K \subset M$, such that $\Phi^{*}J$ is asymptotic to $J_\C$ and $C^{-1} \omega_{\C} \leq \Phi^{*} \omega \leq C \omega_{\C}$ for some $C \geq 1$, then $(M, g)$ is asymptotically conical (AC) with tangent cone at infinity given by $(\C, d_{g_\C})$. As a consequence, we obtain that any Ricci-flat Kähler metric on $T^{*}S^n$ which is quasi-isometric to the Stenzel metric \cite{stenzel_ricci-flat_1993} must be equal to the Stenzel metric up to scaling and diffeomorphism. Similarly, any Ricci-flat Kähler metric on $\mathcal{O}_{\mathbb{P}^1}(-1)^{\oplus2}$ which is quasi-isometric to the Candelas-De la Ossa metric \cite{candelas_comments_1990} must be equal to the Candelas-De la Ossa metric up to scaling and diffeomorphism. This provides new examples of complete Calabi-Yau manifolds for which a Liouville-type theroem holds.

\vspace{1cm}

\section{Introduction}
Liouville-type theorems are a fundamental theme in the analysis of elliptic partial differential equations. Broadly speaking, they provide rigidity results for solutions satisfying appropriate growth conditions, and they are frequently used as a key tool in proving regularity results. In the context of complex Monge-Ampère equations, the classical Liouville theorem for Kähler metrics on 
$\mathbb{C}^n$ dates back to Riebesehl-Schulz \cite{MR735051} where it is proved that any Kähler metric $\omega$ on $\mathbb{C}^n$ which has a constant determinant i.e. $\omega^n = \omega_{\text{euc}}^n$ and which is quasi-isometric to the Euclidean metric i.e. $A^{-1} \omega_{\text{euc}} \leq \omega \leq A \omega_{\text{euc}}$ for some $A \geq 1$ is the pullback of the Euclidean metric by an element of $GL(n, \mathbb{C})$. Hein \cite{https://doi.org/10.1002/cpa.21751} generalized this result to product manifolds of the form $\mathbb{C}^n \times Y$ where $Y$ is a closed Calabi-Yau manifold. A different proof using a mean value formula is given by Li-Li-Zhang \cite{LLZ}. \\

Recently, Klemmensen \cite{klemmensen2025liouville} obtained a Liouville theorem for Calabi-Yau cones which states that any cscK metric $\omega$ which is quasi-isometric to a Calabi-Yau cone metric $\omega_\C$ must be the pullback of $\omega_\C$ by a an automorphism of the cone commuting with the scaling.\\

In this paper, we prove that if $(\C, g_\C, \omega_\C)$ is a Calabi-Yau cone and  $(M,g, \omega)$ is a complete Calabi-Yau manifold with complex structure asymptotic, at infinity, to that of $\C$ such that, outside a compact, we have 
$$ A^{-1} \omega_{\C} \leq \omega \leq A \omega_{\C}, $$
then $(M,g)$ is asymptotically conical with a unique tangent cone at infinity given by $(\C, d_{g_\C})$.\\

To do so, we use the regularity of the complex Monge-Ampère equation to extract a $C_{\text{loc}}^{k, \beta}$ convergent subsequence of $\varepsilon_i^2 g$, where $\varepsilon_i \rightarrow 0$ and then we use Klemmensen's Liouville theorem to conclude that the limit is the pullback of $g_\C$ by a an automorphism of the cone commuting with the scaling. This defines the same metric space as $(\C, g_\C)$. We, then, use the $C_{\text{loc}}^{k, \beta}$ convergence to prove that $g$ has quadratic curvature decay. Using Sun-Zhang's result \cite{AC}, this implies that $g$ must be asymptotically conical and hence fits into the Conlon-Hein classification \cite{conlon_classification_2024}. See Theorem \ref{AC} for the exact statement and proof.\\

As a corollary, we are able to use the uniqueness result from Conlon-Hein \cite[Theorem C]{conlon_classification_2024} to obtain a Liouville theorem for some examples of asymptotically conical Calabi-Yau manifolds: the smoothing $T^{*}S^n$ and the small resolution $\mathcal{O}_{\mathbb{P}^1}(-1)^{\oplus2}$ (case $n=3$) of the nodal cone $\C := \left\{ z\in \mathbb{C}^{n+1}  : \sum_{i=1}^n z_i^2 = 0\right\}$. See Corollary \ref{coro}. 

\subsection*{Acknowledgement}
The author is grateful to his PhD supervisor, Frédéric Rochon, for bringing several relevant references to his attention. He also thanks Hans-Joachim Hein for introducing him to the topic and for valuable discussions related to the problem. The author is further grateful to Shouhei Honda for explaining a proof of the quadratic curvature decay based on results of Colding-Minicozzi and Cheeger-Colding.

\section{Preliminaries}
In this section, we introduce the necessary definitions and review earlier results that will be important for our work.
\subsection{Calabi-Yau cones}
\begin{defi} \label{conical}
The \textbf{Riemannian cone} over a given closed connected Riemannian manifold $\left(L, g_L\right)$ is the Riemannian manifold $\left(\C, g_\C\right)$ where $\C=\mathbb{R}_{>0} \times L$ and $g_\C=d r^2+r^2 g_L$, with $r: \C \rightarrow \mathbb{R}_{>0}$ the projection onto the first factor. We often write $\C=\C(L)$ and call $L$ the link of $\C$. 
\end{defi}
\begin{rem}
    Sometimes we also include the vertex of the cone i.e. a point denoted by $o$ attached to the cone at $\{r=0\}$.
\end{rem}

\begin{defi} \label{CY cone}
    A \textbf{Calabi-Yau cone} is a simply connected Riemannian cone $(\C, g_\C)$ together with a $g_\C$-parallel complex structure $J_\C$ such that $g_\C$ is $J_\C$-Kähler and Ricci-flat. We denote the associated Kähler form by $\omega_\C$ i.e. $\omega_\C (X,Y) = g_\C(J_\C X, Y)$.
\end{defi}
\begin{rem}
    Throughout this paper, we will use Kähler metric and Kähler form interchangeably.
\end{rem}
\begin{nota}
    We denote by $\text{Aut}_{\text{Scl}} (\C)$ the space of holomorphic automorphisms of $\C$ which commute with the scalings $r \mapsto \lambda r$ for $\lambda >0$. Equivalently, these are the holomorphic automorphisms of $\C$ which commute with the scaling vector field $r \partial_r$ and the Reeb vector field $\xi := J_\C (r \partial_r)$.
\end{nota}

The following are some examples of Calabi-Yau cones
\begin{ex}
\hfill
    \begin{itemize}
        \item A trivial example is given by $\mathbb{C}^n$ with the Euclidean metric as a cone over $S^{2n-1}$.
        \item More generally, if $G$ is a finite subgroup of $SU(n)$ acting freely on $\mathbb{C}^n \setminus \{ 0 \}$, then the quotient $\mathbb{C}^n / G$ with the induced quotient metric is a Calabi-Yau cone. 
        \item The nodal cone $\C := \left\{ z\in \mathbb{C}^{n+1}  : \sum_{i=1}^n z_i^2 = 0\right\}$ is a Calabi-Yau cone. In \cite{stenzel_ricci-flat_1993}, Stenzel constructed a Ricci-flat Kähler cone metric given by
        $$\omega_{\C} = i \partial \overline{\partial} \left( \| z\|^2 \right)^{\frac{n-1}{n}}.$$
        We refer to a singularity which is isomorphic to the quadric $C$ above as a \emph{nodal singulairty}.
        \item The Calabi ansatz \cite{calabi_metriques_1979} is a general construction for regular Calabi-Yau cones. If $D$ is a Kähler-Einstein Fano manifold, then, for every integer $k>0$ dividing $c_1(D)$, there exist a Ricci-flat Kähler cone metric on $(\frac{1}{k} K_D)^{\times}$, the blowdown of the zero section of $\frac{1}{k} K_D$.
    \end{itemize}
\end{ex}
In \cite{klemmensen2025liouville}, Klemmensen proved the following Liouville theorem for Calabi-Yau cones
\begin{tm}
    Let $(\C, \omega_\C)$ be a Calabi-Yau cone and let $\omega$ be another constant scalar curvature Kähler metric on $\C$ satisfying 
    $$
    A^{-1} \omega_\C \leq \omega \leq A \omega_\C,
    $$
    for some $A \geq 1$. Then, $\omega = \Psi^{*} \omega_\C$ for some $\Psi \in \text{Aut}_{\text{Scl}} (\C)$. 
\end{tm}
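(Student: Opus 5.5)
The plan has three stages. First, reduce from constant scalar curvature to Ricci-flat with volume form a constant multiple of $\omega_\C^n$. Second, obtain scale-invariant higher-order control so that rescalings of $\omega$ along the cone dilations are precompact. Third, show that $\omega$ is itself a Ricci-flat Kähler cone metric on the complex cone $\C$, and conclude by uniqueness of Calabi-Yau cone metrics.

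\textbf{Stage 1.} Set $F := \log(\omega^n/\omega_\C^n)$. Quasi-isometry gives $|F| \leq n\log A$. Since $\omega_\C$ is Ricci-flat, $\mathrm{Ric}(\omega) = -i\partial\overline{\partial} F$, so the scalar curvature satisfies $S = -\Delta_\omega F$, a constant. In holomorphic coordinates $\Delta_\omega F = \omega^{n-1}\wedge i\partial\overline{\partial}F/\omega^n$. Since $\omega$ is closed, this is a divergence-form operator with respect to $\omega_\C^n$, and by the quasi-isometry its coefficients are uniformly elliptic and bounded measurable. Let $\delta_\lambda$ denote the dilation $r\mapsto\lambda r$, which is holomorphic. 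The rescalings $F_R := F\circ\delta_R$ solve equations of the same type with the same ellipticity constants, but with right-hand side $-SR^2$. Testing against a fixed cutoff on the annulus $\{1<r<2\}$ bounds $|S|R^2$ by $\sup|F|$ times a constant, so $S=0$. Now $F$ is a bounded solution of a uniformly elliptic divergence-form equation on a space with volume doubling and a Poincaré inequality (both inherited from the cone $g_\C$). The Moser/De Giorgi Harnack inequality then gives a Liouville theorem, so $F$ is constant. Hence $\omega^n = c\,\omega_\C^n$ and $\omega$ is Ricci-flat.

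\textbf{Stage 2.} On small balls in $\C\setminus\{o\}$ rescaled to unit size, the form $\omega$ has a local potential. It solves $(i\partial\overline{\partial}u)^n = c\,\omega_\C^n$ with $i\partial\overline{\partial}u$ uniformly equivalent to $\omega_\C$. Evans-Krylov followed by Schauder bootstrapping then gives scale-invariant bounds $|\nabla^k_{g_\C}\omega|_{g_\C} \leq C_k r^{-k}$. Consequently the family $\omega_\lambda := \lambda^{-2}\delta_\lambda^{*}\omega$ is precompact in $C^\infty_{\mathrm{loc}}(\C\setminus\{o\})$, for $\lambda\to 0$ as well as for $\lambda\to\infty$. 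Every limit is again a Ricci-flat Kähler metric on the same complex cone, satisfies the same quasi-isometry bounds, and has volume form $c\,\omega_\C^n$.

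\textbf{Stage 3 (main obstacle).} Since $\omega$ is Ricci-flat and $(\C,\omega)$ is quasi-isometric to a cone, Bishop-Gromov makes $\mathrm{Vol}_\omega(B_\omega(o,r))/r^{2n}$ monotone non-increasing. Its limits as $r\to 0$ and $r\to\infty$ are the volume ratios of the tangent cones at $o$ and at infinity. By Cheeger-Colding together with Stage 2, these tangent cones are Ricci-flat Kähler cone metrics on the complex cone $\C$, each with some Reeb field $\xi_0$ or $\xi_\infty$ in the Lie algebra of the torus acting on $\C$. The key point is to show that these two limiting ratios coincide. Because all volume forms are fixed multiples of $\omega_\C^n$, each ratio is a fixed multiple of the Martelli-Sparks-Yau volume functional evaluated at the corresponding Reeb field. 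By volume minimization, the Reeb field of any Ricci-flat cone metric on $\C$ with holomorphic volume form fixed up to scale is the unique critical point (the minimizer) of this functional. Hence $\xi_0 = \xi_\infty = \xi$ and the two ratios agree.

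Rigidity in Bishop-Gromov then forces $(\C,\omega)$ to be a metric cone with vertex $o$. Its Reeb field is $\xi$, and I expect to identify its homothetic scaling with $r\partial_r$ using the holomorphy of $\delta_\lambda$. Finally, uniqueness of Calabi-Yau cone metrics with a given Reeb field, up to automorphisms (Nitta-Sekiya, Collins-Székelyhidi), yields $\omega = \Psi^{*}\omega_\C$ with $\Psi\in\mathrm{Aut}_{\mathrm{Scl}}(\C)$. I expect the delicate part to be making the tangent-cone identification rigorous, namely that the limits live on the same complex cone with a compatible Reeb field. If that fails, the fallback is to show directly that $\mathcal{L}_{r\partial_r}\omega - 2\omega$ vanishes, using a three-annulus or frequency argument on the decay of the homogeneous components of $\omega-\omega_\C$.
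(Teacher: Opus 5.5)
The paper gives no proof of this statement. It is Klemmensen's theorem \cite{klemmensen2025liouville}, quoted in the preliminaries and used as a black box in Step 4 of the proof of Theorem \ref{AC}, so your proposal has to be judged on its own. Stages 1 and 2 are sound. For $S=0$ you can move $i\partial\overline{\partial}$ entirely onto the cutoff because $d\omega=0$, and the vertex is removable for the bounded solution $F$. The gap is in Stage 3, exactly where the conclusion $\Psi\in\mathrm{Aut}_{\mathrm{Scl}}(\C)$, as opposed to $\Psi\in\mathrm{Aut}(\C)$, has to be earned.

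\textbf{The volume-minimization step fails.} It does not give $\xi_0=\xi_\infty=\xi$. The Martelli--Sparks--Yau functional is strictly convex on the Reeb cone of a \emph{fixed} torus, and you have no reason for $\xi_0$, $\xi_\infty$, $\xi$ to lie in a common torus. In general, Reeb fields of Calabi--Yau cone metrics on the same affine cone are only conjugate under $\mathrm{Aut}(\C)$.

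For instance, take $\mathbb{C}^n$, $n\ge 2$, and $\phi(z)=(z_1,z_2+z_1^2,z_3,\dots,z_n)$. The metric $\phi^*\omega_{\mathrm{euc}}$ is a flat K\"ahler cone metric with vertex $0$, the same holomorphic volume form and the same holomorphic dilations $\delta_\lambda$. Yet its Reeb field is $(\phi^{-1})_*\xi\neq\xi$.

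So at best minimization tells you that the volume ratios agree, which is enough for the Bishop--Gromov rigidity step. It does not tell you that the Reeb field of $\omega$ is $\xi$. Without that, Nitta--Sekiya or Collins--Sz\'ekelyhidi only give $\omega=\Psi^*\omega_\C$ with $\Psi$ an arbitrary automorphism. The same example shows that ``holomorphy of $\delta_\lambda$'' cannot close the gap. What rules out $\phi^*\omega_{\mathrm{euc}}$ is the \emph{global} quasi-isometry, and your Stage 3 never uses it.

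\textbf{The missing idea is a two-sided growth argument.}
Let $R$ be the ring of polynomial-growth holomorphic functions on $\C$, and let $\rho=d_\omega(o,\cdot)$, so that $A^{-1/2}r\le\rho\le A^{1/2}r$ on all of $\C$. Then the subspaces $R_{\le d}=\{f:|f|=O(r^d)\text{ as }r\to\infty\}$ and $R_{\ge d}=\{f:|f|=O(r^d)\text{ as }r\to 0\}$ are the same whether defined with $r$ or with $\rho$.

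Decompose $f\in R$ into finitely many homogeneous pieces for either cone structure. This shows that $R_{\le d}\cap R_{\ge d}$ is the $d$-eigenspace both of $\mathcal{L}_{r\partial_r}$ and of $\mathcal{L}_{\rho\partial_\rho}$. Since elements of $R$ give local holomorphic coordinates, it follows that $\rho\partial_\rho=r\partial_r$, so the Reeb field of $\omega$ is exactly $\xi$.

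Apply the same argument to $\omega_0$ and $\omega_\infty$, which satisfy the same quasi-isometry bounds. This gives $\xi_0=\xi_\infty=\xi$ directly. Then $\theta_0=\theta_\infty$ follows because the volume of a K\"ahler cone depends only on its Reeb field, so volume minimization is not needed at all.

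\textbf{A further point to justify.} Bishop--Gromov monotonicity and its rigidity on $(\C,d_\omega)$ need their own argument. The metric $\omega$ is not smooth and complete at $o$, so Cheeger--Colding does not apply verbatim. The following ingredients suffice:
\begin{itemize}
\item Laplacian comparison for $d_\omega(o,\cdot)$ on $\C\setminus\{o\}$, which works because minimizing geodesics from $o$ leave $o$ at once;
\item the zero capacity of $o$;
\item the Bochner argument giving $\mathrm{Hess}\,(\rho^2/2)=g_\omega$.
\end{itemize}
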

\begin{rem}
    When $\C = \mathbb{C}^n$, we get that $\text{Aut}_{\text{Scl}} (\mathbb{C}^n) = GL(n, \mathbb{C})$ and therefore we recover the classical Liouville theorem of  Riebesehl-Schulz \cite{MR735051}.
\end{rem}
\subsection{Asymptotically conical Calabi-Yau manifolds}
We give the following definitions
\begin{defi}
    Let $(M,g)$ be an open Riemannian manifold. We say that $M$ is \textbf{asymptotically conical} (AC) if there exists a Riemannian cone $(\C, g_\C)$, a compact $K \subset M$ and a diffeomorphism $\Phi : \C \setminus \overline{B_1(o)} \rightarrow M \setminus K$ such that 
    $$\left|\nabla_{g_{\C}}^{j} \left(\Phi^{*} g - g_{\C} \right)\right|_{g_{\C}} = O(r^{-\lambda -j}) $$ for some $\lambda >0$.
    
\end{defi}
\begin{defi}
    An \textbf{asymptotically conical Calabi-Yau manifold} is an open Ricci-flat Kähler manifold $(M,g, \omega)$ such that $g$ is asymptotically conical.
\end{defi}
There have been several works constructing examples of asymptotically Calabi-Yau manifolds \cite{goto_calabi-yau_2012,van_coevering_ricci-flat_2010, van2011examples, conlon2013asymptotically, conlon2015asymptotically}, but Conlon-Hein \cite{conlon_classification_2024} obtained the following complete classification.
\begin{tm}[Conlon-Hein] \label{CH}
    We fix a Calabi-Yau cone $(\C, \omega_\C)$. Suppose $V$ is an affine variety which is a deformation of $\C$ with negative $\xi$-weight and let $\pi : M \rightarrow V$ be a holomorphic crepant resolution such that $M$ is Kähler. Then, for any class $\mathrm{t} \in H^2(M, \R)$ such that $\langle\mathrm{t}^d , Z \rangle >0$ for all irreducible subvarieties $Z$ of $\mathrm{Exc} (\pi)$, $d = \dim Z >0$, and for all $g \in \mathrm{Aut}_{\text{Scl}}(C)$, $M$ admits an asymptotically conical Ricci-flat Kähler metric which is asymptotic to $g^{*}\omega_\C$. In addition, these classify all asymptotically conical Calabi-Yau manifolds up to diffeomorphism.
\end{tm}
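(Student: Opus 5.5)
The statement splits into an existence part and a classification part; I would prove them separately, and in both the central tool is weighted analysis on the end modelled on $\C$.

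\textbf{Existence.} First I would fix a reference metric. Since $V$ has negative $\xi$-weight relative to $\C$, there is a diffeomorphism $\Phi$ from the end of $\C$ to the end of $V$ such that $\Phi^{*}J_V - J_\C = O(r^{-\lambda})$ with all derivatives, for some $\lambda>0$. The holomorphic volume form $\Omega_V$ is asymptotic to $\Omega_\C$ at the same rate. Pulling back by the crepant resolution $\pi$ and using $g^{*}\omega_\C$ with $g\in\mathrm{Aut}_{\text{Scl}}(\C)$, I would construct a closed real $(1,1)$-form $\omega_0$ on $M$ in the class $\mathrm{t}$ with the following properties:
\begin{itemize}
\item $\omega_0$ equals $i\partial\overline{\partial}$ of a corrected cone potential $r^2$ outside a compact set;
\item $\omega_0$ is positive near $\mathrm{Exc}(\pi)$.
\end{itemize}
The positivity requires a Kähler form in the class $\mathrm{t}$ near the exceptional set, and this is exactly where the hypothesis $\langle \mathrm{t}^d, Z\rangle>0$ on irreducible subvarieties $Z\subset\mathrm{Exc}(\pi)$ enters. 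I would invoke a Demailly–Păun type numerical criterion, in the relative or local version, to produce that form and then glue it to the conical model. I expect this gluing, together with handling classes that are not compactly supported, to be the main obstacle of this part.

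Next I would arrange that the Ricci potential $f=\log(i^{n^2}\Omega\wedge\overline{\Omega}/\omega_0^n)$ decays like $O(r^{-\lambda'})$ with derivatives, possibly after subtracting harmonic corrections. I would then solve $(\omega_0+i\partial\overline{\partial}u)^n=e^f\omega_0^n$ by the Tian–Yau continuity method, using weighted Hölder spaces $C^{k,\alpha}_{\delta}$ in which the Laplacian is Fredholm for weights avoiding the indicial roots. The $C^0$ estimate comes from Moser iteration with the Sobolev inequality for AC manifolds. The decay $u=O(r^{2-\lambda''})$ comes from the weighted isomorphism theorem for $\Delta$. Together these give an AC metric asymptotic to $g^{*}\omega_\C$.

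\textbf{Classification.} Given any AC Calabi–Yau manifold $(M,\omega)$ with cone $\C$, the first step is to show that its end is biholomorphic to the end of an affine variety $V$. I would do this by constructing holomorphic functions of polynomial growth, lifting the $\xi$-homogeneous generators of the coordinate ring of $\C$ via weighted $\overline{\partial}$-estimates (Hörmander $L^2$ theory with weights). This shows that the ring of polynomial-growth holomorphic functions is finitely generated and yields a proper holomorphic map $M\to V$ contracting a compact analytic set. A filtration and degeneration argument in the style of Donaldson–Sun would then show that $V$ degenerates to $\C$ with negative $\xi$-weight, so that $V$ is a deformation of $\C$ of the type in the statement.

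Crepancy of $M\to V$ follows from the existence of the parallel holomorphic volume form. The Kähler class satisfies the positivity condition on $\mathrm{Exc}(\pi)$ because it restricts to an honest Kähler form there. Finally, uniqueness within a class and identification of the asymptotic automorphism $g$ would follow by comparing two solutions via the weighted maximum principle. I expect this classification half, and in particular the finite generation and the negative-weight degeneration statement, to be the hardest part of the whole proof.
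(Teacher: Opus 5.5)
The paper does not prove this statement. It imports it from Conlon--Hein and uses it as a black box, together with their uniqueness theorem, in Corollary \ref{coro}. Your outline has broadly the standard shape: a reference form in $\mathrm{t}$ modelled on the cone plus a weighted Tian--Yau solution of the Monge--Amp\`ere equation, and, for the converse, polynomial-growth holomorphic functions whose growth filtration degenerates to $\mathbb{C}[\C]$. However, the existence half has a genuine gap. You require $\omega_0\in\mathrm{t}$ and $\omega_0=i\partial\overline{\partial}(\cdot)$ outside a compact set. This forces $\mathrm{t}$ to restrict to zero in $H^2(M\setminus K)\cong H^2(L)$, i.e.\ to be compactly supported. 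That fails in one of the two examples the paper cares about. For $M=\mathcal{O}_{\mathbb{P}^1}(-1)^{\oplus 2}$ one has $H^2_c(M)\cong H_4(M)=0$, so $H^2(M)\to H^2(S^2\times S^3)$ is injective and no K\"ahler class is compactly supported. This is why $\omega_{\text{co}}$ contains the term $4\pi^*\omega_{FS}$, which is not exact on the end and decays only like $r^{-2}$. You name such classes as an obstacle, but nothing in your construction handles them, so your argument does not reach the Candelas--de la Ossa metrics.

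The missing idea is to take $\omega_0$ at infinity to be $i\partial\overline{\partial}$ of the cone potential plus $\theta$. Here $\theta$ is the pull-back to the cone of the harmonic representative of $\mathrm{t}|_L$ on the Sasaki--Einstein link. Harmonic $2$-forms on such a link are basic, horizontal, transversally primitive and of transverse type $(1,1)$, since positive transverse Ricci curvature kills basic holomorphic $2$-forms. Hence $\theta$ is closed and $J_\C$-$(1,1)$, with $\theta\wedge\omega_\C^{n-1}=0$ and $|\theta|_{g_\C}=O(r^{-2})$. So $\omega_0>0$ for large $r$, $\theta$ changes the volume form only at order $r^{-4}$, and the resulting metric converges to $g^*\omega_\C$ no faster than $r^{-2}$. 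You must still correct $\theta$ to a $J$-$(1,1)$ form with controlled decay.

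Several other steps are asserted rather than argued. Demailly--P\u{a}un is a theorem about compact K\"ahler manifolds, so its ``local version'' near $\mathrm{Exc}(\pi)$ needs a compactification or a separate argument. Your gluing needs the two forms to differ by $i\partial\overline{\partial}$ of a function on the overlap, not merely by an exact form. Finally, lifting generators of $\mathbb{C}[\C]$ does not by itself give finite generation. For that you need every polynomial-growth holomorphic function to have a nonzero homogeneous leading term on $\C$, so that the associated graded ring of the growth filtration embeds into $\mathbb{C}[\C]$.
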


See \cite[Definition 1.7]{conlon_classification_2024} for the definition of a deformation with negative $\xi$-weight. In this paper, we are interested in the following two examples
\begin{ex}
\hfill
\begin{itemize}
    \item 
The nodal cone from above $\C = \left\{ z\in \mathbb{C}^{n+1}  : \sum_{i=1}^n z_i^2 = 0\right\}$ has a natural smoothing given by 
$$ \C_{1}:= \left\{ z\in \mathbb{C}^{n+1}  : \sum_{i=1}^n z_i^2 = 1\right\}.$$
This is in fact isomorphic to the cotangent bundle of the sphere $T^{*}S^n$. In \cite{stenzel_ricci-flat_1993}, the author obtained an asymptotically conical Calabi-Yau metric on $T^{*}S^n$ of the form 
$$\omega_{\text{st}}= i \partial \overline{\partial}(f(||z||^2)),$$
where $f$ is a function solving a certain ODE. 
\item In the case when $n=3$, the nodal cone also has a small crepant resolution given by the total space of the bundle
$$ \mathcal{O}_{\mathbb{P}^1}(-1)^{\oplus2} \rightarrow \mathbb{P}^1.$$
In \cite{candelas_comments_1990}, the authors obtained an asymptotically conical Calabi-Yau metric on $\mathcal{O}_{\mathbb{P}^1}(-1)^{\oplus2}$ of the form 
$$ \omega_{\text{co}} = i \partial \overline{\partial} (f(r^3)) + 4 \pi^{*}\omega_{FS},$$
where $\omega_{FS}$ is the Fubini-Study metric on $\mathbb{P}^1$, $\pi: \mathcal{O}_{\mathbb{P}^1}(-1)^{\oplus2} \rightarrow \mathbb{P}^1 $ is the bundle projection and $f$ solves a certain ODE.
 \end{itemize}   
\end{ex}
\section{Main result and proof}
The following is our main theorem
\begin{tm} \label{AC}
Let $(M, J)$ be an open complex Kähler manifold with trivial canonical bundle modeled on a Calabi-Yau cone $(\C,J_{\C}, \omega_{\C}, g_{\C})$ i.e. there exists a compact $K \subset M$ and a diffeomorphism $\Phi : \C \setminus \overline{B_1(o)} \rightarrow M \setminus K$ such that $$\left|\nabla_{g_{\C}}^{j} \left(\Phi^{*} J - J_{\C} \right)\right|_{g_{\C}} = O(r^{-\lambda -j}) $$ for some $\lambda >0$. Let $\omega$ be a Ricci-flat $J$-Kähler form on $M$, with associated metric $g$, such that $$ C^{-1} \omega_{\C} \leq \Phi^{*} \omega \leq C \omega_{\C}, $$
for some $C \geq 1$. Then $g$ is asymptotically conical with tangent cone at infinity given by $(\C, d_{g_\C}, o)$.
\end{tm}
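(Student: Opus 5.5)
The strategy is a blow-down argument: rescale $g$, pass to a limit on the cone, identify the limit with Klemmensen's Liouville theorem, and convert the resulting smooth convergence into quadratic curvature decay, at which point Sun-Zhang's theorem \cite{AC} gives the asymptotically conical structure.

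\textbf{Rescaled sequence.} Fix any sequence $\varepsilon_i \to 0$ and let $\delta_{\lambda}:\C\to\C$ denote the scaling $r\mapsto \lambda r$. On $\C\setminus \overline{B_{\varepsilon_i}(o)}$ consider
\[
\omega_i := \varepsilon_i^{2}\,\delta_{\varepsilon_i^{-1}}^{*}\Phi^{*}\omega, \qquad J_i := \delta_{\varepsilon_i^{-1}}^{*}\Phi^{*}J .
\]
Since $\delta_\lambda^*\omega_\C=\lambda^2\omega_\C$, the quasi-isometry hypothesis is scale invariant: $C^{-1}\omega_\C\le\omega_i\le C\omega_\C$. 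The decay $|\nabla^j(\Phi^*J-J_\C)|=O(r^{-\lambda-j})$ gives $J_i\to J_\C$ in $C^\infty_{\mathrm{loc}}(\C\setminus\{o\})$.

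\textbf{Uniform estimates.} Ricci-flatness means $\omega_i^n = e^{F_i}\omega_\C^n$ (up to the $J_i$ versus $J_\C$ discrepancy), where $F_i$ is pluriharmonic for $J_i$. Indeed, $\log(\omega^n/\Omega\wedge\bar\Omega)$ is pluriharmonic, using the holomorphic volume form coming from the trivial canonical bundle. The quasi-isometry bounds $F_i$ uniformly. On compact subsets $K'\Subset\C\setminus\{o\}$ I would work in local $J_i$-holomorphic coordinates, which converge to $J_\C$-coordinates. There $\omega_i$ is uniformly equivalent to the Euclidean metric and has bounded pluriharmonic log-volume. The Evans–Krylov/Calabi $C^{2,\alpha}$ estimates for uniformly elliptic complex Monge–Ampère equations (in the form for bounded $\omega$ with $C^\alpha$ determinant, e.g.\ Wang/Chu–Tosatti–Weinkove, or harmonic coordinates plus the Einstein equation) then give $C^{\alpha}$ bounds on $\omega_i$, and bootstrapping gives $C^{k,\beta}$ bounds. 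A diagonal subsequence then converges in $C^{k,\beta}_{\mathrm{loc}}(\C\setminus\{o\})$ to a Ricci-flat $J_\C$-Kähler form $\omega_\infty$ with $C^{-1}\omega_\C\le\omega_\infty\le C\omega_\C$. Since $\omega_\infty$ has zero scalar curvature, Klemmensen's theorem applies and gives $\omega_\infty=\Psi^*\omega_\C$ with $\Psi\in\mathrm{Aut}_{\mathrm{Scl}}(\C)$. Such a $\Psi$ commutes with scaling, so it preserves $r$ up to an isometry-type identification, and $(\C,d_{\omega_\infty},o)$ is isometric to $(\C,d_{g_\C},o)$.

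\textbf{Curvature decay and conclusion.} Convergence on the annulus $\{1\le r\le 2\}$ gives $|\mathrm{Rm}_{\omega_i}|\le K$ there for all large $i$, with $K$ independent of the subsequence. If quadratic decay failed, there would be points $x_i$ with $r(x_i)\to\infty$ and $r(x_i)^2|\mathrm{Rm}_g|(x_i)\to\infty$. Choosing $\varepsilon_i=r(x_i)^{-1}$ and passing to a subsequence contradicts the bound above. Hence $|\mathrm{Rm}_g|=O(r^{-2})$, and since $r$ is comparable to $d_g(p,\cdot)$ by quasi-isometry, this is quadratic curvature decay. Quasi-isometry also gives Euclidean volume growth. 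Ricci-flat metrics with Euclidean volume growth and quadratic curvature decay have a unique smooth tangent cone, and by Sun–Zhang \cite{AC} they are asymptotically conical. Every blow-down limit was shown above to be $(\C,d_{g_\C},o)$, so this is the tangent cone.

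\textbf{Main obstacle.} I expect the main difficulty to be the uniform higher-order estimates. The complex structures $J_i$ vary with $i$, and only $C^0$ metric equivalence is given, so one needs a Monge–Ampère/Evans–Krylov estimate that is uniform in the structure. My first attempt would be to take local potentials $\omega_i=i\partial\bar\partial_{J_i}\phi_i$ on small $J_i$-holomorphic balls, with $\phi_i$ bounded because $\omega_i\le C\omega_\C$, and apply the complex Evans–Krylov theorem with pluriharmonic right-hand side.
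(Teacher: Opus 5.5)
Your proposal is correct and follows essentially the same route as the paper. That route is: blow down by the cone dilations, get uniform Evans–Krylov/Schauder estimates in $J_i$-holomorphic charts converging to $J_\C$-charts (the paper obtains these charts from the Newlander–Nirenberg theorem with parameters), identify the limit with Klemmensen's Liouville theorem, prove quadratic curvature decay by rescaling at the bad points, and conclude with Sun–Zhang. The one step you leave implicit is upgrading the $C^{k,\beta}_{\mathrm{loc}}$ convergence on $\C\setminus\{o\}$ to pointed Gromov–Hausdorff convergence of $(M,\varepsilon_i^2 g,p)$; the paper also treats this only briefly, in its Step 5, and it follows because $K$ has $\varepsilon_i^2 g$-diameter $O(\varepsilon_i)$ and, by the quasi-isometry, the region $\{r\le\delta\}$ has rescaled diameter $O(\delta)$ uniformly in $i$.
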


\begin{proof}
    Since $g_{\C}$ is a cone metric, it has maximal volume growth and therefore, by our bound hypothesis, $g$ has maximal volume growth as well. By the result of Sun-Zhang \cite{AC}, if $g$ has quadratic curvature decay, then it must be asymptotically conical. Hence, the theorem follows from the following two claims\\

    \textbf{Claim 1} $(M,g)$ has a unique tangent cone at infinity given by $\left(\C, d_{g_\C}, o\right)$. Moreover, the convergence is, in fact, in $C_{\text{loc}}^{k, \beta}$ for any $0<\beta<1$ and $k \in \N$.\\

    \textbf{Claim 2} The metric $g$ has quadratic curvature decay i.e. there exists $C >0$ such that for a fixed $p \in M$ and all $x \in M$ we have
    $$ |Rm_g(x)| \leq C \cdot r(x)^{-2},$$
    where $r(x):= d(p,x)$.\\
    
    \emph{Proof of Claim 1.}
    Throughout the proof, C designates a uniform constant that may vary at different occurrences without further mention. Fix $p \in M \setminus K$ and $\{\varepsilon_i\}$ a sequence of positive numbers such that $\varepsilon_i \rightarrow 0 $ as $i \rightarrow \infty$. Consider the sequence of metrics $g_i := \varepsilon_i^{2} g$. On $\C \setminus \overline{ B_{\varepsilon_i}(o)}$, we define $\tilde g_i := \delta_{\frac{1}{\varepsilon_i}}^{*} (\Phi^{*} g_i)$, where $\delta_\lambda : \C \rightarrow \C$ is the cone dilation $r \mapsto \lambda r$.  Therefore, $\Phi \circ \delta_{\frac{1}{\varepsilon_i}}$ induces an isometry $\left(M \setminus K, g_i, p \right) \cong \left(\C \setminus \overline{B_{\varepsilon_i}(o)}, \tilde g_i, p_i \right)$, where $p_i := (\Phi \circ \delta_{\frac{1}{\varepsilon_i}})^{-1} (\{p\})$. We wish to extract a subsequence of $\left(\C \setminus B_{\varepsilon_i}(o), \tilde g_i \right)$ which converges in the $C_{\text{loc}}^{k, \beta}$ topology to $\left(\C \setminus \{ o\}, \Psi^{*}g_{\C} \right)$ for some $\Psi \in \text{Aut}_{\text{Scl}} (\C)$ and then use that to conclude that $(M, d_{g_i}, p)$ subconverges in the pointed Gromov-Hausdorff sense to $(\C, d_{g_\C}, o)$.\\
        
        First, for $\varepsilon>0$, we denote the pullback of the complex structure $J_\varepsilon := \delta_{\frac{1}{\varepsilon}}^{*} (\Phi^{*} J) $ and $J_i:= J_{\varepsilon_i}$. Therefore $\tilde g_i$ is $J_i$-Kähler and we denote the associated Kähler form by $\tilde \omega_i= \varepsilon_i^2 \delta_{\frac{1}{\varepsilon_i}}^{*} \left( \Phi^{*} \omega \right)$.
\begin{itemize}
    \item \emph{Step 1: $J_\varepsilon$ converges to $J_{\C}$ in $C_{\text{loc}}^{\infty} (\C \setminus \{o \})$ as $\varepsilon \to 0$:}\\
    
Let
\[
T := \Phi^{*}J - J_{\C}
\qquad \text{on } \C \setminus \overline{B_1(o)}.
\]
By assumption, for every $j \ge 0$ there exists $C_j>0$ such that
\[
|\nabla_{g_{\C}}^{\,j} T|_{g_{\C}} \le C_j r^{-\lambda-j}
\qquad \text{on } \C \setminus \overline{B_1(o)}.
\]

\medskip

First observe that the cone complex structure is invariant under dilations:
\[
\delta_a^{*} J_{\C} = J_{\C} \qquad \text{for all } a>0.
\]
Therefore
\[
J_\varepsilon - J_{\C}
= \delta_{1/\varepsilon}^{*}(\Phi^{*}J)
  - \delta_{1/\varepsilon}^{*} J_{\C}
= \delta_{1/\varepsilon}^{*}(\Phi^{*}J - J_{\C})
= \delta_{1/\varepsilon}^{*} T.
\]

\medskip

Next recall that the cone metric satisfies
\[
\delta_a^{*} g_{\C} = a^2 g_{\C} .
\]
Consequently, for any tensor field $S$ of type $(1,1)$ and any $j\ge0$,
\[
\bigl|\nabla_{g_{\C}}^{\,j} (\delta_a^{*}S)\bigr|_{g_{\C}}(x)
\le
a^{j}\,
\bigl|\nabla_{g_{\C}}^{\,j} S\bigr|_{g_{\C}}(\delta_a x).
\]

Applying this to $S=T$ and $a=1/\varepsilon$ gives
\[
\begin{aligned}
|\nabla_{g_{\C}}^{\,j}(J_\varepsilon - J_{\C})|(x)
&=
|\nabla_{g_{\C}}^{\,j}(\delta_{1/\varepsilon}^{*}T)|(x) \\
&\le
\left(\frac{1}{\varepsilon}\right)^{j}
|\nabla_{g_{\C}}^{\,j} T|(\delta_{1/\varepsilon}x).
\end{aligned}
\]

Using the decay assumption on $T$ yields
\[
|\nabla_{g_{\C}}^{\,j}(J_\varepsilon - J_{\C})|(x)
\le
\left(\frac{1}{\varepsilon}\right)^{j}
C_j r(\delta_{1/\varepsilon}x)^{-\lambda-j}.
\]

The cone dilation scales the radial function by
\[
r(\delta_{1/\varepsilon}x) = \frac{1}{\varepsilon} r(x),
\]
hence, we obtain
\[
r(\delta_{1/\varepsilon}x)^{-\lambda-j}
=
\left(\frac{1}{\varepsilon}\right)^{-\lambda-j}
r(x)^{-\lambda-j}
=
\varepsilon^{\lambda+j} r(x)^{-\lambda-j}.
\]

Substituting this gives
\[
|\nabla_{g_{\C}}^{\,j}(J_\varepsilon - J_{\C})|(x)
\le
C_j \varepsilon^{\lambda} r(x)^{-\lambda-j}.
\]

\medskip

Let $K' \subset \C \setminus \{o\}$ be compact. Then there exists $r_0>0$ such that
$r(x) \ge r_0$ on $K'$. For $\varepsilon$ sufficiently small we have
$K' \subset \C \setminus \overline{B_{\varepsilon}(o)}$, and therefore
\[
\sup_{x\in K'}
|\nabla_{g_{\C}}^{\,j}(J_\varepsilon - J_{\C})|(x)
\le
C_j \varepsilon^{\lambda} r_0^{-\lambda-j}
\longrightarrow 0
\qquad \text{as } \varepsilon \to 0.
\]

Thus
\[
J_\varepsilon \to J_{\C}
\quad \text{in } C^\infty_{\mathrm{loc}}(\C \setminus \{o\})
\]
with respect to the cone metric $g_{\C}$.
    \item \emph{Step 2: $\tilde \omega_i$ subconverges to some $\tilde \omega_{\infty}$ in $C^{k, \beta'}_{\text{loc}}(\C \setminus \{o \})$, for $k \in \N$ and $0<\beta'<1$:}\\
    
By homogeneity of the cone metric,
\[
\delta_{1/\varepsilon_i}^*\omega_{\C}=\varepsilon_i^{-2}\omega_{\C}.
\]
Hence from
\[
C^{-1}\,\omega_{\C} \le \Phi^*\omega \le C\,\omega_{\C}
\]
we obtain
\[
C^{-1}\,\omega_{\C} \le \tilde\omega_i \le C\,\omega_{\C}
\qquad\text{on } \C\setminus \overline{B_{\varepsilon_i}(o)}.
\]
Moreover $\tilde\omega_i$ is $J_i$-K\"ahler and Ricci-flat.\\

Let $K'' \Subset \C\setminus\{o\}$ be a compact. Since the cone metric has bounded geometry on
compact sets away from the apex, $K''$ can be covered by finitely many $J_\C$-holomorphic coordinate charts 
\[
\Psi_{\alpha}:B_1(0)\subset\mathbb{C}^n\to \C
\]
such that
$$  \frac{1}{C} \omega_{\text{euc}} \leq \Psi_\alpha^{*} \omega_\C \leq C \omega_{\text{euc}}$$
for some uniform constant $C$. Therefore, we also get
$$ \frac{1}{C} \omega_{\text{euc}} \leq \Psi_\alpha^{*} \tilde \omega_i\leq  C \omega_{\text{euc}}. $$
Now, the family of complex structures $\{ J_\varepsilon\}_{\varepsilon>0}$ is continuous, with all its derivatives, in $\varepsilon$ on the compact $K''$ and, by Step $1$, $J_\varepsilon \to J_{\C}$ smoothly as $\varepsilon \to 0$. Therefore, by the Nirenberg-Newlander theorem with parameter (see \cite[Proposition 1.2.]{NNP}), $K''$ can be covered by
finitely many $J_i$-holomorphic coordinate charts
\[
\Psi_{\alpha,i}:B_1(0)\subset\mathbb{C}^n\to \C
\]
such that $\Psi_{\alpha,i} \rightarrow \Psi_\alpha$ in $C_{\text{loc}}^{\infty}$. Therefore, in these coordinates, the pulled-back metrics
\[
\omega_{\alpha,i}:=\Psi_{\alpha,i}^*\tilde\omega_i
\]
are Ricci-flat K\"ahler metrics on $B_1(0)\subset \mathbb{C}^n$ satisfying
\[
\frac{1}{C}\omega_{\mathrm{euc}}\le\omega_{\alpha,i}\le C\,\omega_{\mathrm{euc}}
\]
for some uniform constant $C$ and $i \gg 1$. Therefore, the Evans-Krylov estimates imply that
\[
\|\omega_{\alpha,i}\|_{C^{0,\beta}(B_1)}\le C
\]
for $0<\beta < 1$. By differentiating the complex Monge-Ampère equation, using the Schauder estimates and a Bootstrapping argument, we get
\[
\|\omega_{\alpha,i}\|_{C^{k,\beta}(B_1)}\le C_k.
\] By \cite[Lemma 6.33]{gilbarg1998elliptic},
after passing to a subsequence, for $0<\beta'<\beta$,
\[
\tilde\omega_i \to \tilde\omega_\infty
\qquad\text{in } C^{k,\beta'}(K'').
\]
A diagonal argument over an exhaustion of $\C\setminus\{o\}$ by compact sets
gives the desired $C^{k,\beta'}_{\mathrm{loc}}$ convergence.
    
    \item \emph{Step 3: $\tilde \omega_{\infty}$ is a smooth Ricci-flat $J_{\C}$-Kähler form:}\\

    Since $J_i\to J_{\C}$ smoothly on compact subsets and each $\tilde\omega_i$ is
$J_i$-K\"ahler, passing to the limit in the identities
\[
d\tilde\omega_i=0,
\qquad
\tilde\omega_i(J_i\cdot,J_i\cdot)=\tilde\omega_i(\cdot,\cdot),
\]
shows that
\[
d\tilde\omega_\infty=0,
\qquad
\tilde\omega_\infty(J_{\C}\cdot,J_{\C}\cdot)=\tilde\omega_\infty(\cdot,\cdot).
\]
Moreover, from the uniform bound
\[
C^{-1}\omega_{\C}\le \tilde\omega_i\le C\omega_{\C}
\]
passing to the limit, we obtain
\[
C^{-1}\omega_{\C}\le \tilde\omega_\infty\le C\omega_{\C},
\]
so $\tilde\omega_\infty$ is positive. Thus $\tilde\omega_\infty$ is a
$C^{k,\beta'}$ $J_{\C}$-K\"ahler form.

For smoothness, again, we can use the regularity of the complex Monge-Ampère equation and use a Bootstrapping argument. Moreover, Since $\tilde\omega_i$ is Ricci-flat and $J_i \to J_{\C}$ smoothly,
passing to the limit in the identity
\[
\mathrm{Ric}_{J_i}(\tilde\omega_i)
=
-\sqrt{-1}\partial_{J_i}\bar\partial_{J_i}
\log(\tilde\omega_i^n)=0
\]
shows that
\[
\mathrm{Ric}_{J_{\C}}(\tilde\omega_\infty)=0.
\]
    \item \emph{Step 4: $\tilde \omega_{\infty} = \Psi^{*} \omega_{\C}$ for some $\Psi \in \text{Aut}_{\text{Scl}} ({\C})$:}\\

    $\tilde \omega_\infty$ is a smooth Ricci-flat $J_{\C}$-K\"ahler form on $\C \setminus \{ o\}$ such that 
$$
C^{-1} \omega_{\C} \leq \tilde \omega_{\infty} \leq C \omega_{\C}.
$$
Therefore, using the Liouville theorem for Calabi-Yau cones \cite[Theorem 2.3]{klemmensen2025liouville}, we get that $\tilde \omega_\infty = \Psi^{*} \omega_{\C}$ for some $\Psi \in \text{Aut}_{\text{Scl}} ({\C})$.
    
    \item \emph{Step 5: $\left(M, d_{g_i}, p \right)$ subconverges in the pointed Gromov-Hausdorff sense to $\left(\C, d_{g_{\C}}, o \right)$:}\\

    First, since $K \subset M$ is compact and $p \in M \setminus K$, there exists $D >0$ such that $K \subset B_g(p,D)$, hence $K \subset B_{g_i}(p, \varepsilon_i D)$. Consider the quotient pointed sequence of metric spaces $(\hat{M}, \hat d_{g_i}, p)$, where $\hat M:= M /K \cong (M \setminus K) \bigcup \{ *_K\}$. Therefore, since $\varepsilon_i D \rightarrow 0$, the pointed sequence $(M, d_{g_i}, p)$ has the same sublimits as $(\hat M, \hat d_{g_i},p)$. But, we also know that $(M \setminus K, g_i,p) \cong (\C \setminus \overline{B_{\varepsilon_i}(o)}, \tilde g_i, p_i)$ and $
r(p_i)=\varepsilon_i r(x)\to 0
$, where $x:= \Phi(p)$, therefore, since, up to taking a subsequence, we have $\tilde g_i\to \Psi^*g_{\C}$ in $C^{k, \beta}$ on compact subsets of $C\setminus\{o\}$ and $\bigl(\C,d_{\Psi^*g_{\C}},o\bigr)$ is isometric, as a metric space, to $\bigl(\C,d_{g_{\C}},o\bigr)$, we conclude that
\[
\bigl(\hat M, \hat d_{g_i},p\bigr)
\to
\bigl(\C,d_{g_{\C}},o\bigr)
\]
in the pointed Gromov-Hausdorff sense. Hence, $(M, d_{g_i}, p) \rightarrow (\C, d_{g_\C}, o)$ in the pointed Gromov-Hausdorff sense. 
\hfill \qedsymbol

        \end{itemize}
         \emph{Proof of Claim 2.} In the following, we suppose, without loss of generality that $p \in M \setminus K$. Suppose by contradiction that $g$ does not have quadratic curvature deacy. Therefore, for all $j \in \N$, there exists $x_j \in M$ such that 
$$ |Rm_g(x_j)| > j \cdot r(x_j)^{-2}.$$
First, notice that, up to taking a subsequence, we have 
$$r(x_j) \rightarrow \infty.$$ Else, there exists $C'>0$ such that $r(x_j) \leq C'$, or, equivalently $x_j \in \overline{B(p, C')}$. But, $|Rm_g|$ is bounded on the compact $\overline{B(p, C')}$, and therefore $r(x_j)^2 |Rm_g(x_j)| \leq C''$ which contradicts our supposition.\\ 

Now, define $\lambda_j := r(x_j)$ and let $g_j:= \lambda_j^{-2} g$. In particular, we have $r_{g_j}(x_j):= d_{g_j}(x_j,p) = 1$. Since, $\lambda_j \rightarrow \infty$, then, by Claim $1$, $(M,\tilde g_j)$ subconverges in $C_{\text{loc}}^{k, \beta}$ to the cone $(\C, g_{\C})$, where $\tilde g_j=\delta_{\lambda_j}^{*} (\Phi^{*} g_j)$. Denote $q_j := \delta_{\lambda_j}^{-1} (\Phi^{-1} (x_j))$ and $r_{g_\C}(x):= d_{g_\C}(o,x)$. Using the quasi-isometry, we have
\begin{align*}
C^{-1} r(x_j) -C' \leq & r_{g_\C}(\Phi^{-1}(x_j)) \leq C r(x_j) +C' \implies
\\
    \frac{C^{-1}}{2} \leq \lambda_j^{-1} \left( C^{-1} r(x_j) - C' \right) \leq & \lambda_j^{-1} r_{g_\C}(\Phi^{-1}(x_j)) = r_{g_\C}(q_j) \leq \lambda_j^{-1} \left(C r(x_j) + C'\right) \leq 2 C.
    \end{align*}
In other words, $r_{g_\C}(q_j)= d_{g_C}(o, q_j)$ remains bounded and therefore, up to passing to a subsequence, we have $q_j \rightarrow q_\infty$.  Hence, by the $C_{\text{loc}}^{k, \beta}$ convergence, we get 
$$|Rm_{g_j}(x_j)|= |Rm_{\tilde g_j}(q_j)| \rightarrow |Rm_{g_{\C}} (q_\infty)|, \quad j \rightarrow \infty.$$
Therefore $|Rm_{g_j}(x_j)|$ is uniformly bounded which contradicts the fact that
$$ |Rm_{g_j}  (x_j)| = \lambda_j^2 |Rm_g(x_j)| > j.$$
         
    \end{proof}
    \begin{rem}
        One could also obtain the quadratic curvature decay by using the uniqueness of the tangent cone guaranteed by the result of Colding-Minicozzi \cite{uniqueness} and the $C_{\text{loc}}^{\infty}$ convergence obtained by Cheeger-Colding \cite[Theorem 7.3]{Cheeger_Colding} and proceeding in a similar way as in the proof of Claim $2$. This would prove a note from \cite[p3]{uniqueness}.
    \end{rem}
Now, since $(M,J,g)$ is an asymptotically conical Calabi-Yau manifold, then, by \cite[Theorem A, Theorem B]{MR4740213} (See Theorem \ref{CH}), it is equivalent, up to diffeomorphism, to a K\"ahler crepant resolution of a negative $\xi$-weight deformation of the cone $\C$ (See \cite[Definition 1.7]{MR4740213} for the definition of negative $\xi$-weight deformation of the cone). Moreover, the uniqueness result \cite[Theorem C]{MR4740213} allows us to obtain a Liouville theorem for smoothings of the nodal cone as well as its crepant resolution when $n=3$. 
\begin{cor} \label{coro}
\hfill
\begin{itemize}
    \item 
    Consider the AC Calabi-Yau manifold
    $$
T^{*}S^n \cong \{ z \in \mathbb{C}^{n+1};  \hspace{0.1cm} \sum_{i=1}^{n+1} z_i^2 =1 \}
$$ 
with the Stenzel metric $\omega_{\text{st}}$ from \cite{stenzel_ricci-flat_1993}. If $\omega$ is another Ricci-flat K\"ahler metric on $T^{*}S^n$ such that 
$$
A^{-1}\omega_{\text{st}} \leq \omega \leq A \omega_{\text{st}}, 
$$
for some $A>1$, then $\omega= \lambda  \Psi^*\omega_{\text{st}}$ for some diffeomorphism $\Psi$ and $\lambda>0$.
\item Consider the AC Calabi-Yau manifold
$$
\mathcal{O}_{\mathbb{P}^1}(-1)^{\oplus2}
$$
with the Candelas-De la Ossa metric $\omega_{co}$ from \cite{candelas_comments_1990}.  If $\omega$ is another Ricci-flat K\"ahler metric on $\mathcal{O}_{\mathbb{P}^1}(-1)^{\oplus2}$ such that 
$$
A^{-1}\omega_{\text{co}} \leq \omega \leq A \omega_{\text{co}}, 
$$
for some $A>1$, then $\omega= \lambda \Psi^*\omega_{\text{co}}$ for some diffeomorphism $\Psi$ and $\lambda>0$.
\end{itemize}
\end{cor}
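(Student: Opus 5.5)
Our plan is to reduce the corollary to Theorem \ref{AC} and then to the uniqueness statement \cite[Theorem C]{conlon_classification_2024}. We treat both cases at once. Let $(M,J)$ be either $T^{*}S^n$ or $\mathcal{O}_{\mathbb{P}^1}(-1)^{\oplus 2}$ with its standard complex structure, and let $\omega_0$ be the reference metric ($\omega_{\text{st}}$ or $\omega_{\text{co}}$).

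\emph{Step 1: verifying the hypotheses of Theorem \ref{AC}.} Since $\omega_0$ is AC Calabi-Yau, it comes with an asymptotic cone, namely the nodal cone $\C$ with Stenzel's cone metric $\omega_\C$. It also comes with a diffeomorphism $\Phi:\C\setminus\overline{B_1(o)}\to M\setminus K$ such that $\Phi^*J-J_\C=O(r^{-\lambda})$ with all derivatives, as required in Theorem \ref{AC}. For the smoothing this is explicit: one can take $\Phi$ to be, for instance, the nearest-point projection between the quadrics $\{\sum z_i^2=0\}$ and $\{\sum z_i^2=1\}$ outside a compact set. For the small resolution, $\pi$ is a biholomorphism away from the zero section, so $\Phi^*J=J_\C$ exactly. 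Because $\omega_0$ is AC, we have $\Phi^*\omega_0=\omega_\C+O(r^{-\lambda})$, so $\tfrac12\omega_\C\le\Phi^*\omega_0\le 2\omega_\C$ for $r$ large. Combined with the hypothesis $A^{-1}\omega_0\le\omega\le A\omega_0$, this gives
\[
C^{-1}\omega_\C\le\Phi^*\omega\le C\omega_\C
\]
after enlarging $K$. In both cases $K_M$ is trivial, since $M$ is a smoothing or crepant resolution of a Gorenstein cone. Theorem \ref{AC} therefore applies, and $\omega$ is AC with tangent cone $(\C,d_{g_\C},o)$.

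\emph{Step 2: identifying $\omega$ among the Conlon-Hein metrics.} By Theorem \ref{CH} and \cite[Theorem C]{conlon_classification_2024}, an AC Calabi-Yau metric on a fixed crepant resolution of a fixed negative-weight deformation is determined, up to diffeomorphism, by its Kähler class and by its asymptotic cone metric $g^*\omega_\C$ with $g\in\mathrm{Aut}_{\text{Scl}}(\C)$.

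We first handle the asymptotic cone. The tangent cone of $\omega$ was shown in Step 4 of Claim 1 to be $\Psi^*\omega_\C$, and this identifies it with the model up to $\mathrm{Aut}_{\text{Scl}}(\C)$. For the nodal quadric, $\mathrm{Aut}_{\text{Scl}}(\C)=\mathbb{C}^*\times O(n+1,\mathbb{C})$ (modulo $\pm1$). The $O(n+1,\mathbb{C})$ factor preserves the smoothing $\{\sum z_i^2=1\}$, and in the resolution case it lifts to $M$ (as $SO(4,\mathbb{C})\cong SL_2\times SL_2/\pm1$ acts on the resolution, possibly after composing with a flop). These automorphisms can therefore be absorbed into $\Psi$. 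The $\mathbb{C}^*$ factor contributes only a scaling $\lambda$. Its phase part commutes with the Reeb flow and, when combined with the complex structure on $M$, amounts to a biholomorphism of $M$ (or of $\{\sum z_i^2=c\}$, rescaled back).

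We next handle the Kähler class. For $T^*S^n$ with $n\ge3$, $H^2=0$. For $n=2$, $H^2(T^*S^2)=\mathbb{R}$, but the class of a Ricci-flat metric quasi-isometric to $\omega_{\text{st}}$ should be handled by noting that the class is determined by its pairing with the vanishing cycle. Here we would try to use the hyperkähler rotation, or restrict to the Kähler class of $\omega_{\text{st}}$, which is $0$. For $\mathcal{O}(-1)^{\oplus2}$, $H^2=\mathbb{R}$ is generated by $[\mathbb{P}^1]$-dual, and positivity forces $[\omega]=\mu[\omega_{\text{co}}]$ for some $\mu>0$. Matching the cone scaling and the class then amounts to a combined choice of $\lambda$ and a homothety.

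\emph{Main obstacle.} The delicate point is the bookkeeping in Step 2. We must check that the asymptotic data produced by Theorem \ref{AC} (the cone automorphism $\Psi$ and the possibly different decay rate) matches the normalization in \cite[Theorem C]{conlon_classification_2024}. We must also check that every residual freedom, namely elements of $\mathrm{Aut}_{\text{Scl}}(\C)$ and the choice of Kähler class, is realized by a diffeomorphism of $M$ together with a scaling. We would try first to use the explicit $O(n+1,\mathbb{C})$-symmetry of both models. This reduces the freedom to the single scaling parameter, which gives $\omega=\lambda\Psi^*\omega_0$.
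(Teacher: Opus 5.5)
Your Step 1 is correct, and it is the implicit first half of the paper's argument. The quasi-isometry passes to the cone through the AC map of the model, $\Phi^*J-J_\C$ decays (and vanishes for the small resolution), and $K_M$ is trivial, so Theorem \ref{AC} makes $\omega$ AC with tangent cone $(\C,d_{g_\C},o)$.

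The paper then just invokes Conlon--Hein's Theorem C. It uses that theorem as the ready-made statement that an AC Calabi--Yau metric asymptotic to the nodal cone is, up to scaling and diffeomorphism, Stenzel's or Candelas--de la Ossa's. Theorem \ref{AC} supplies exactly that hypothesis, so the bookkeeping in your Step 2 is already contained there.

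You instead read Theorem C as uniqueness in a fixed K\"ahler class with a fixed asymptotic cone metric, and then leave its key input unproved. That input is that $\omega$ is asymptotic to some $g^*\omega_\C$ with $g\in\mathrm{Aut}_{\mathrm{Scl}}(\C)$, at a polynomial rate and with respect to the same holomorphic identification at infinity as $\omega_0$. Theorem \ref{AC} does not give this: its AC map comes from Sun--Zhang, and Step 4 of Claim 1 only yields a subsequence-dependent $\Psi$ with no rate. This is exactly the point you call the main obstacle. (Also, the phase part of $\mathbb{C}^*$ is the Reeb flow, which fixes $\omega_\C$, so nothing needs lifting there; in any case $z\mapsto e^{i\theta}z$ does not preserve $\{\sum z_i^2=1\}$ in general.)

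The genuine gap is $n=2$. It cannot be closed as you suggest, because for $n=2$ the first bullet is false as an identity of K\"ahler forms.

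\begin{itemize}
\item Let $X=\{z_1^2+z_2^2+z_3^2=1\}\cong T^*S^2$, so $H^2(X;\R)\cong\R$.
\item Apply Theorem \ref{CH} with $\pi=\mathrm{id}$; the exceptional set is empty, so every class is admissible. Alternatively use Kronheimer's construction. Either way you get Ricci-flat K\"ahler metrics $\omega_t$ on $X$ with $[\omega_t]=t$ for every $t\in\R$.
\item All the $\omega_t$ are asymptotic to the flat cone metric on $\mathbb{C}^2/\mathbb{Z}_2$, hence quasi-isometric to $\omega_{\text{st}}$.
\item But $\omega_{\text{st}}=i\partial\bar\partial f$ is exact. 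So for $t\neq0$ we have $[\lambda\Psi^*\omega_{\text{st}}]=0\neq[\omega_t]$ for every diffeomorphism $\Psi$ and every $\lambda>0$.
\end{itemize}

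Your two suggested fixes do not help. Hyperk\"ahler rotation only yields the Riemannian statement $g_t=\lambda\Psi^*g_{\text{st}}$, since all Eguchi--Hanson metrics are homothetic. Appealing to the class of $\omega_{\text{st}}$ simply adds the hypothesis $[\omega]=0$.

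So the first bullet has to be restricted to $n\ge 3$, where $H^2(T^*S^n)=0$, or weakened to an isometry statement for $n=2$. The paper's one-line proof does not address this case either.
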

\begin{proof}
    This is a combination of Theorem \ref{AC} and \cite[Theorem C]{MR4740213}.
\end{proof}

\medskip
\textsc{Département de mathématiques, Université du Québec à Montréal}\\
\textit{Email address :} \texttt{benabida.oussama@gmail.com}
\end{document}